\theoremstyle{plain}
\newtheorem{theorem}{Theorem}
\theoremstyle{definition}
\theoremstyle{remark}
\numberwithin{equation}{section} 
\begin{document}
	\title{Domain-structured chaos for   discrete random processes}

\author{Akhmet Marat}
\address{Middle East Technical University\\ Department of Mathematics\\ Ankara\\ Turkey}
\email{marat@metu.edu.tr}

\begin{abstract}
We   introduce   the notion   of   domain-structured chaos  and apply  it  to   establish  a connection between   stochastic dynamics and  deterministic chaos.
\end{abstract}
\subjclass[2010]{65P20;34H10;60G05;60G10}
\keywords{Chaos, Domain-structured chaos, Discrete random processes}
\maketitle

In this paper, a new efficient method is used to show that a discrete random process exhibits chaotic dynamics. To achieve this goal, the concepts of abstract similarity map and domain-structured chaos, previously considered in our papers\cite{AkhmetSimilarity,AkhmetDomainStruct,AkhmetAbstFract,AkhmetCube, AkhmetBook}, are further developed. It must be noted that the concept of domain-structured chaos is more abstract in the present paper than in\cite{AkhmetDomainStruct},   since it  is  free of topological  assumptions.

	\section{Domain structured chaos} 
\label{sec1}

Let the metric space $(F,d)$   be given,   with  the  distance $d.$   Assume   that  there  exists  the  following presentation for  the set $F,$ 
\begin{equation} \label{AbstFracSet}
\mathcal{F} =  \big\{\mathcal{F}_{i_1 i_2 ... i_n ... } : i_k=1,2, ..., m, \; k=1, 2, ... \big\},
\end{equation}
where  $m$  is a natural number.  
The  presentation means  that  each   element  of the  set  $ F$  is   labeled  through at  least  by  one  member of the set  $\mathcal F,$   and each   element  of the set    $\mathcal F$  presents a  member from $F.$       We   assume that  the   uniqueness  is not  necessarily  required  for this relation.   That  is, if 
$\mathcal{F}_{i_1 i_2 ... i_n \ldots}$  and $\mathcal{F}_{j_1 j_2 ... jn, \ldots}$   present the same   element  $f \in \mathcal F,$   it  is not  necessary  that   $i_n=j_n$ for all $n =1,2,\ldots.$    Moreover,   we    keep  the distance $d$   for  the  set  $\mathcal F$  considering 
$d(\mathcal{F}_{i_1 i_2 ... i_n \ldots}, \mathcal{F}_{j_1 j_2 ... jn, \ldots} ) =  d(f_1,f_2),$  if  	$\mathcal{F}_{i_1 i_2 ... i_n \ldots}$  and $\mathcal{F}_{j_1 j_2 ... jn, \ldots}$   are presentations of   elements $f_1$ and $f_2$  of the set  $F$ respectively,   such  that  
$d(\mathcal{F}_{i_1 i_2 ... i_n \ldots}, \mathcal{F}_{j_1 j_2 ... jn, \ldots} ) = 0$  for   different  presentations of the same point in $F.$    
In what  follows, we   call  the  set  $\mathcal F$ a \textit{pre-chaotic structure}  for  the  set  $F.$			  It  is clear that  there  is a naturally  determined map  $\varPhi: F \to  F,$    which  does not  satisfy  the  uniqueness condition,  and values $\varPhi(f)$  for  a fixed $f$ constitute  the  set $\{\varphi(\mathcal{F}_{i_1 i_2 ... i_n \ldots})\},$  with   all labels of the point $f.$ 
In what  follows, we shall describe chaotic  properties of  the  map  $\varPhi$  in terms of the  dynamics of the map  $\varphi.$  

The following  sets are  needed, 
\begin{equation} \label{AbstFracSubSet}
\mathcal{F}_{i_1 i_2 ... i_n} = \bigcup_{j_k=1,2, ..., m } \mathcal{F}_{i_1 i_2 ... i_n j_1 j_2 ... },
\end{equation}
where indices  $ i_1, i_2, ..., i_n,$  are fixed.

It is clear that
\[ \mathcal{F} \supseteq \mathcal{F}_{i_1} \supseteq \mathcal{F}_{i_1 i_2} \supseteq ... \supseteq \mathcal{F}_{i_1 i_2 ... i_n} \supseteq \mathcal{F}_{i_1 i_2 ... i_n i_{n+1}} ... , \; i_k=1, 2, ... , m, \; k=1, 2, ... \, ,\]
that is, the sets form a nested sequence. 
Let us introduce the map $ \varphi : \mathcal{F} \to \mathcal{F} $ such that
\begin{equation} \label{MapDefn}
\varphi (\mathcal{F}_{i_1 i_2 ... i_n ... }) = \mathcal{F}_{i_2 i_3 ... i_n ... }.
\end{equation}
Considering iterations of the map, one can verify that
\begin{equation} \label{MapSubset}
\varphi^n(\mathcal F_{i_1 i_2 ... i_n}) = \mathcal{F},
\end{equation}
for arbitrary natural number $ n $ and $ i_k=1,2, ..., m, \; k=1, 2, ... \, $. The relations (\ref{MapDefn}) and (\ref{MapSubset}) give us a reason to call $ \varphi $ a \textit{similarity map} and the number $ n $ the \textit{order of similarity}.
The  similarity  map $\phi$  is known  as the Bernoulli shift \cite{Wiggins88}   for the symbolic dynamics.
We will  say   that for the sets $ \mathcal{F}_{i_1 i_2 ... i_n} $ the \textit{diameter condition} is valid,  if 
\begin{equation} \label{Diamprop}
\max_{i_k=1,2, ..., m} \mathrm{diam}(\mathcal{F}_{i_1 i_2 ... i_n}) \to 0 \;\; \text{as} \;\; n \to \infty,
\end{equation}
where $ \mathrm{diam}(A) = \sup \{ d(\textbf{x}, \textbf{y}) : \textbf{x}, \textbf{y} \in A \} $, for a set $ A $ in $ \mathcal{F} $.

Denote  by $ d(A, B)= \inf \{ d(\textbf{x}, \textbf{y}) : \textbf{x}\in A, \, \textbf{y} \in B \} $ the  function of    two bounded sets $ A $ and $ B $ in $ \mathcal{F} .$   Set $ \mathcal{F} $ satisfies the \textit{separation condition }of degree $ n $ if there exist a positive number $ \varepsilon_0 $ and a natural number $ n $  such that for arbitrary   indices  $ i_1 i_2 ... i_n $ one can find   indices $ j_1 j_2 ... j_n $ such  that
\begin{equation} \label{C2}
d \big( \mathcal{F}_{i_1 i_2 ... i_n} \, , \, \mathcal{F}_{j_1 j_2 ... j_n} \big) \geq \varepsilon_0.
\end{equation}

It   is clear   that  the  couple $(\mathcal F, d)$  is not, in general,  a metric space. Nevertheless,  it  is true  that  the  map  $\phi$  is continuous with  respect  to  the  distance $d,$  and all the   attributes of    definitions  for  Poincar\'{e},  Li-Yorke and Devaney  chaos can  be extended for  the  dynamics.  The similarity map $ \varphi $ possesses the three ingredients of   the Devaney chaos, namely density of periodic points, transitivity and sensitivity. A point $ \mathcal{F}_{i_1 i_2 i_3 ...} \in \mathcal{F} $ is periodic with period $ n $ if its index consists of endless repetitions of a block of $ n $ terms.

  The  map  $\varPhi$  admits \textit{ domain structured  Devaney  chaos} on $F,$   if the  map  $\varphi$  is Devaney  chaotic on $\mathcal F.$   	
  	
If    the   diameter and separation conditions are valid,  then  $\mathcal F$   is   said to be  a \textit{chaotic  structure}  for   $F.$  

The proof of  the  next  theorem  extends  the  technique  for  symbolic  dynamics \cite{Wiggins88}.

\begin{theorem} \label{Thm1} If $\mathcal F$ is a chaotic  structure for $F,$  then the  dynamics  of  $ \varPhi$   admits   domain structured  chaos in the sense of Devaney.
\end{theorem}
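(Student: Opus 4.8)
The plan is to unwind the definitions and then follow the symbolic-dynamics template for the Bernoulli shift, with the metric replaced by the pulled-back distance $d$. By definition, $\varPhi$ admits domain structured Devaney chaos on $F$ exactly when the similarity map $\varphi$ is Devaney chaotic on $\mathcal F$, and continuity of $\varphi$ with respect to $d$ has already been recorded; so it remains to establish the three ingredients for $\varphi$ on $\mathcal F$: density of periodic points, transitivity, and sensitivity. The one device all three arguments rest on is the reformulation of the diameter condition (\ref{Diamprop}): given any $\mathcal F_{i_1 i_2 \ldots}$ and any $\varepsilon>0$, one can pick $N$ with $\mathrm{diam}(\mathcal F_{i_1 \ldots i_N})<\varepsilon$, so that every element of $\mathcal F$ carrying a label that begins with the block $i_1 \ldots i_N$ lies within $\varepsilon$ of $\mathcal F_{i_1 i_2 \ldots}$. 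I would first remark that the separation condition (\ref{C2}) forces $m\ge 2$ and $\mathcal F$ to be infinite, so the conclusion is not vacuous.

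For density of periodic points, given $\mathcal F_{i_1 i_2 \ldots}$ and $\varepsilon>0$ I would choose $N$ as above and take the point $p\in\mathcal F$ whose index, by (\ref{AbstFracSet}), is the endless repetition of the block $i_1\ldots i_N$; then $\varphi^N(p)=p$ by (\ref{MapDefn}), and $p\in\mathcal F_{i_1\ldots i_N}$ gives $d(\mathcal F_{i_1 i_2 \ldots},p)<\varepsilon$. For transitivity I would fix an enumeration of all finite index-blocks over $\{1,\ldots,m\}$ and let $\xi\in\mathcal F$ be the point whose index is their concatenation; for any target $\mathcal F_{k_1 k_2 \ldots}$ and any $\varepsilon>0$, choosing $N$ with $\mathrm{diam}(\mathcal F_{k_1\ldots k_N})<\varepsilon$ and locating the block $k_1\ldots k_N$ in the index of $\xi$ at a position $T+1$ yields $\varphi^T(\xi)\in\mathcal F_{k_1\ldots k_N}$, hence $d(\varphi^T(\xi),\mathcal F_{k_1 k_2 \ldots})<\varepsilon$, so $\xi$ has a dense orbit. (Alternatively, transitivity follows directly from (\ref{MapSubset}), since $\varphi^n(\mathcal F_{i_1\ldots i_n})=\mathcal F\supseteq\mathcal F_{j_1\ldots j_k}$ for any two basic sets.) Both of these use only the diameter condition.

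Sensitivity is where the separation condition enters, and I would show that the number $\varepsilon_0$ of (\ref{C2}) serves as a sensitivity constant. Let $n_0$ be the degree in (\ref{C2}). Given $\mathcal F_{i_1 i_2 \ldots}$ and $\varepsilon>0$, pick $N$ with $\mathrm{diam}(\mathcal F_{i_1\ldots i_N})<\varepsilon$; apply the separation condition to the block $i_{N+1}\ldots i_{N+n_0}$ to obtain a block $j_1\ldots j_{n_0}$ with $d(\mathcal F_{i_{N+1}\ldots i_{N+n_0}},\mathcal F_{j_1\ldots j_{n_0}})\ge\varepsilon_0$, and let $g\in\mathcal F$ be the point with index $i_1\ldots i_N\, j_1\ldots j_{n_0}\, i_{N+n_0+1}\, i_{N+n_0+2}\ldots$. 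Then $g\in\mathcal F_{i_1\ldots i_N}$, so $d(\mathcal F_{i_1 i_2 \ldots},g)<\varepsilon$, whereas $\varphi^N(\mathcal F_{i_1 i_2 \ldots})\in\mathcal F_{i_{N+1}\ldots i_{N+n_0}}$ and $\varphi^N(g)\in\mathcal F_{j_1\ldots j_{n_0}}$, so $d(\varphi^N(\mathcal F_{i_1 i_2 \ldots}),\varphi^N(g))\ge\varepsilon_0$. This gives sensitivity, completing the verification of the three ingredients and hence the theorem.

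The main obstacle is not a deep difficulty but the care needed because $(\mathcal F,d)$ is not a metric space: distinct elements of $\mathcal F$ may be at distance zero, a single point may carry several index labels, and $\mathcal F$ need not be compact or complete, so the classical theorems on chaoticity of the Bernoulli shift cannot be quoted verbatim and every estimate must be traced back to (\ref{Diamprop}) and (\ref{C2}). The diameter condition is precisely what converts agreement on a long initial block into $d$-closeness in all three constructions, while the separation condition supplies exactly the single ``spreading block'' required to run the sensitivity argument; the rest is bookkeeping --- fixing one label of each relevant point and checking that the constructed points belong to the claimed basic sets.
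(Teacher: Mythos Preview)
Your proof is correct and follows essentially the same route as the paper's own proof: both verify density of periodic points, transitivity, and sensitivity for $\varphi$ by the standard symbolic-dynamics constructions (periodic repetition of an initial block, concatenation of all finite blocks, and a separating block obtained from condition~(\ref{C2})), using the diameter condition~(\ref{Diamprop}) each time to convert initial-block agreement into $d$-closeness. Your write-up is somewhat more explicit than the paper's---in particular you spell out the role of the degree $n_0$ in the sensitivity step and specify the tail of the perturbed point $g$---but there is no substantive difference in method.
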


\begin{proof}
	Fix a member $ \mathcal{F}_{i_1 i_2 ... i_n ... } $ of $ \mathcal{F} $ and a positive number $ \varepsilon $. Find a natural number $ k $ such that $ \mathrm{diam}(\mathcal{F}_{i_1 i_2 ... i_k}) < \varepsilon $ and choose a $ k $-periodic element $ \mathcal{F}_{i_1 i_2 ... i_k i_1 i_2 ... i_k ...} $ of $ \mathcal{F}_{i_1 i_2 ... i_k} $. It is clear that the periodic point is an $ \varepsilon $-approximation for the considered member. The density of periodic points is thus proved.
	
	Next, utilizing the diameter condition, the transitivity will be proved if we show the existence of an element $ \mathcal{F}_{i_1 i_2 ... i_n ...} $ of $ \mathcal{F} $ such that for any subset $ \mathcal{F}_{i_1 i_2 ... i_k} $ there exists a sufficiently large integer $ p $ so that $ \varphi^p(\mathcal{F}_{i_1 i_2 ... i_n ...}) \in \mathcal{F}_{i_1 i_2 ... i_k} $. This holds  true since we can construct a sequence $ i_1 i_2 ... i_n ... $ such that it contains all the sequences of the type $ i_1 i_2 ... i_k $ as blocks.
	
	For sensitivity, fix a point $ \mathcal{F}_{i_1 i_2 ... } \in \mathcal{F} $ and an arbitrary positive number $ \varepsilon $. Due to the diameter condition, there exist an integer $ k $ and element $ \mathcal{F}_{i_1 i_2 ... i_k j_{k+1} j_{k+2} ...} \neq \mathcal{F}_{i_1 i_2 ... i_k i_{k+1} i_{k+2} ...} $ such that $ d(\mathcal{F}_{i_1 i_2 ... i_k i_{k+1} ...}, \mathcal{F}_{i_1 i_2 ... i_k j_{k+1} j_{k+2} ...}) < \varepsilon $. We choose  $ j_{k+1}, j_{k+2}, ... $ such that $ d(\mathcal{F}_{i_{k+1} i_{k+2} ... i_{k+n}}, \mathcal{F}_{j_{k+1} j_{k+2} ... j_{k+n}}) > \varepsilon_0 $, by the separation condition. This proves the sensitivity.	
\end{proof}

In \cite{AkhmetUnpredictable,AkhmetPoincare}, Poisson stable motion is utilized to distinguish  chaotic behavior from  periodic motions in Devaney and Li-Yorke types.  The dynamics  is given the named  Poincar\`{e} chaos. 

  The  map  $\Phi$  admits  on $F$ \textit{domain structured  Poincar\'{e}  chaos,}  if the  map  $\varphi$  is Poincar\'{e}  chaotic on $\mathcal F.$

 The next theorem shows that the Poincar\`{e} chaos is valid for the similarity dynamics.

\begin{theorem} \label{Thm2}  If $\mathcal F$ is a chaotic  structure for $F,$  then the  map $\varPhi$  possesses   domain structured Poincar\`{e} chaos.
\end{theorem}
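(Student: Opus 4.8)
The plan is to supplement Theorem \ref{Thm1} by exhibiting one unpredictable Poisson stable trajectory of the similarity map $\varphi$ on $\mathcal F$. Since $\varphi$ already carries the Devaney ingredients (density of periodic points, transitivity, sensitivity) by Theorem \ref{Thm1}, producing such a trajectory is exactly what upgrades the dynamics to Poincar\'{e} chaos in the sense of \cite{AkhmetUnpredictable,AkhmetPoincare}, and hence makes $\varPhi$ admit domain structured Poincar\'{e} chaos on $F$. Concretely, one must exhibit a member $\omega = \mathcal F_{i_1 i_2 i_3 \ldots}$ of $\mathcal F$, a number $\varepsilon_0 > 0$, and sequences of positive integers $t_k \to \infty$ and $\tau_k \to \infty$ such that $d(\varphi^{t_k}(\omega), \omega) \to 0$ and $d(\varphi^{t_k + \tau_k}(\omega), \varphi^{\tau_k}(\omega)) \geq \varepsilon_0$ for all $k$; the first relation also certifies the Poisson stability of $\omega$.

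First, take $\varepsilon_0$ and the separation degree $n_0$ as in the separation condition (\ref{C2}), so that $m \geq 2$. Fix any index block $P_0 = i_1 i_2 \ldots i_{n_0}$ of length $n_0$ and, applying the separation condition to it, fix a block $\gamma = j_1 j_2 \ldots j_{n_0}$ with $d(\mathcal F_{i_1 \ldots i_{n_0}}, \mathcal F_{j_1 \ldots j_{n_0}}) \geq \varepsilon_0$. Define finite blocks by the concatenation recursion $P_{k+1} = P_k P_k \gamma$, so that each $P_k$ is a prefix of $P_{k+1}$; write $p_k$ for the length of $P_k$, and note $p_k \to \infty$. Let $\omega = \mathcal F_{i_1 i_2 \ldots}$ be the member of $\mathcal F$ whose index is the common infinite extension of the blocks $P_k$. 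For every $k$, the index of $\omega$ begins with $P_{k+1} = P_k P_k \gamma$, so it equals $P_k$ on the entries $1, \ldots, p_k$, equals $P_k$ again on the entries $p_k + 1, \ldots, 2 p_k$, and equals $\gamma$ on the entries $2 p_k + 1, \ldots, 2 p_k + n_0$; in particular the first $n_0$ entries of $\omega$ are $i_1 \ldots i_{n_0}$.

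Put $t_k = \tau_k = p_k$. By (\ref{MapDefn}) the point $\varphi^{p_k}(\omega)$ is indexed by the tail of the index of $\omega$ starting at entry $p_k + 1$, which begins with $P_k$; hence $\varphi^{t_k}(\omega)$ and $\omega$ both belong to the set $\mathcal F_{i_1 \ldots i_{p_k}}$ associated with the length-$p_k$ prefix of the index of $\omega$, and the diameter condition (\ref{Diamprop}) gives $d(\varphi^{t_k}(\omega), \omega) \leq \mathrm{diam}(\mathcal F_{i_1 \ldots i_{p_k}}) \to 0$, which proves $\varphi^{t_k}(\omega) \to \omega$ and the Poisson stability of $\omega$. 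Next, $\varphi^{\tau_k}(\omega) = \varphi^{p_k}(\omega)$ is indexed by a sequence starting with $P_k$, hence with $i_1 \ldots i_{n_0}$, so $\varphi^{\tau_k}(\omega) \in \mathcal F_{i_1 \ldots i_{n_0}}$, while $\varphi^{t_k + \tau_k}(\omega) = \varphi^{2 p_k}(\omega)$ is indexed by the tail of the index of $\omega$ from entry $2 p_k + 1$, which starts with $\gamma$, so $\varphi^{t_k + \tau_k}(\omega) \in \mathcal F_{j_1 \ldots j_{n_0}}$. Since $d(A', B') \geq d(A, B)$ whenever $A' \subseteq A$ and $B' \subseteq B$, this yields $d(\varphi^{t_k + \tau_k}(\omega), \varphi^{\tau_k}(\omega)) \geq d(\mathcal F_{i_1 \ldots i_{n_0}}, \mathcal F_{j_1 \ldots j_{n_0}}) \geq \varepsilon_0$. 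With $t_k = \tau_k = p_k \to \infty$ this is precisely the unpredictability of $\omega$.

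The routine ingredients — continuity of $\varphi$ with respect to $d$, the correct transfer of the distance between $\mathcal F$ and $F$, and the sensitivity of $\varphi$ (already obtained in the proof of Theorem \ref{Thm1}) — need no new argument. The one genuinely delicate point is the placement of the separation ``at the seam'': the time $\tau_k$ must be chosen so that $\varphi^{t_k}(\omega)$, which has just come $\varepsilon$-close to $\omega$ along the copied block $P_k$, is nonetheless forced to read the separated block $\gamma$ exactly where $\omega$ reads the start of $P_0$. The recursion $P_{k+1} = P_k P_k \gamma$ is designed to realize this for all $k$ simultaneously, while keeping both the return time $t_k$ and the separation offset $\tau_k$ divergent.
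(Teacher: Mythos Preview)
Your construction is correct and is essentially the approach the paper has in mind: the paper does not write out a proof at all but simply points to Lemma~3.1 of \cite{AkhmetPoincare}, whose content is precisely the explicit manufacture of an unpredictable (Poisson stable plus separated) point for the Bernoulli shift, transported here to the similarity map $\varphi$. Your recursive block scheme $P_{k+1}=P_kP_k\gamma$ is a clean, self-contained realization of exactly that lemma, so the two proofs coincide in substance; yours merely unpacks the citation.
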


The proof of the last theorem is based on the verification of Lemma 3.1 in \cite{AkhmetPoincare},  applied  to the similarity map.

In addition to the Devaney and Poincar\`{e} chaos, it can be shown that the Li-Yorke chaos is also present  in the dynamics of the map $ \varPhi $. The proof of the  theorem is similar to that of Theorem 6.35 in \cite{Chen} for the shift map defined in  the space of symbolic sequences.  In what   follows we will  say  that the dynamics  of the map  $\varPhi$  admits the   domain structured chaos.

\section{Chaotic random processes} 

  Consider a  discrete time  random    process  discrete   finite state space  as a family of random variables $\textbf X(n), n=1,2,\ldots.$   Denote  by 
$S$ the  state  space of the process,  and  consider it  with  a distance $d.$  We assume positive probability  for all  members   of the state  space,  and for  each  experiment   one of the  members of the state space must   necessarily happen.     

Suppose that  the   there exists   a chaotic structure 		     
\begin{equation} \label{DomainStrSpace}
\mathcal{S} =  \big\{\mathcal{S}_{i_1 i_2 ... i_n ... } : i_k=1,2, ..., p, \; k=1, 2, ... \big\},
\end{equation}
for  the state space.  In the  light  of the  last  section's  results,  this means that   domain-structured chaos, and,  consequently,  Poincar\'{e},  Li-Yorke and Devaney  chaos are present.   Nevertheless,  it does not  mean that  the  random  process is chaotic,   since its   dynamics may  be  richer  than that   of the  map  $\phi.$

Next, we will  consider the stochastic process,  which is chaotic,   owing  to  the   coincidence of stochastic   and deterministic dynamics.  

We assume that  the state space is finite and    denote $S= \{s_1,s_1,\ldots,s_p\}.$

Assign   for  each member $s_i, i = 1,\ldots,p,$ of the state space infinitely many  presentation elements $\mathcal {S}_{i,i_2,\ldots,i_k,\ldots}$  where  $i_j =1,\ldots,p, j=2,\ldots.$ 		      	 
Thus,  one can  consider the  pre-chaotic structure,    
$\mathcal S =  \{\mathcal {S}_{i_1,i_2,\ldots,i_k,\ldots},  i_j =1,\ldots,p, j=1,\ldots  \}.$

Consider  the  distance 		      	                                                                                                                                                                                                                                                                                                              
$d(\mathcal {S}_{i_1,i_2,\ldots,i_k,\ldots}, \mathcal {S}_{j_1,j_2,\ldots,j_k,\ldots})= d(s_{i_1},s_{j_1})$ if $i_1,  j_1 = 1,\ldots,p.$       It  is   easy  to  verify, that  the  diameter and separation conditions are  valid for  the   set  $\mathcal S.$       Since  the  set  of all  realizations coincides with    the set  of all sequences on the finite set  of numbers,     and each experiment is equivalent  to  the iteration of the shift,  the random dynamics admits  the same chaotic properties as the trajectories of the similarity map dynamics. Therefore,  the   following  theorem  is valid.
\begin{theorem} If    the  state  space is finite,  then the random  process  $X(n)$   admits the  domain-structured chaos.   
\end{theorem}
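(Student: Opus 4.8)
The plan is to reduce the theorem to the already-established Theorems~\ref{Thm1} and~\ref{Thm2} by exhibiting an explicit chaotic structure for the finite state space $S$, and then to argue that for this particular construction the dynamics of the random process and the dynamics of the similarity map $\varphi$ literally coincide, so the chaotic attributes transfer verbatim.

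First I would fix the pre-chaotic structure $\mathcal{S}=\{\mathcal{S}_{i_1 i_2 \ldots i_k \ldots} : i_j = 1,\ldots,p\}$ described in the excerpt, in which the first index $i_1$ records which state $s_{i_1}\in S$ is presented and the remaining indices are free labels. With the distance $d(\mathcal{S}_{i_1 i_2\ldots},\mathcal{S}_{j_1 j_2\ldots}) = d(s_{i_1},s_{j_1})$, I would verify the diameter condition: for $n\ge 2$ every set $\mathcal{S}_{i_1 i_2\ldots i_n}$ consists of presentations all sharing first index $i_1$, hence has diameter $d(s_{i_1},s_{i_1})=0$, so $\max_{i_k}\mathrm{diam}(\mathcal{S}_{i_1\ldots i_n})=0$ for all $n\ge 2$ and in particular tends to $0$. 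For the separation condition of degree $1$, since all members of $S$ have positive probability and $S$ is finite with at least two states (the case $p=1$ being degenerate and non-chaotic, or handled trivially), I would set $\varepsilon_0 = \min_{r\ne s} d(s_r,s_s) > 0$; then for any index $i_1$ one can choose $j_1\ne i_1$, giving $d(\mathcal{S}_{i_1},\mathcal{S}_{j_1}) = d(s_{i_1},s_{j_1})\ge \varepsilon_0$. Thus $\mathcal{S}$ is a chaotic structure for $S$, and Theorems~\ref{Thm1} and~\ref{Thm2} immediately yield that the induced map $\varPhi$ on $S$ admits domain-structured chaos in the senses of Devaney and Poincar\'{e} (and, as noted in the excerpt, Li-Yorke).

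The remaining point — and the conceptual heart of the argument — is to identify the random process $X(n)$ with the similarity dynamics. Here I would observe that a realization of the process is a sequence $(s_{i_1}, s_{i_2}, \ldots)$ of states, and the set of all such realizations is exactly the full one-sided shift space on $p$ symbols, which is precisely the index set underlying $\mathcal{S}$. Performing one experiment and passing from the current observed state to the next corresponds exactly to dropping the leading symbol, i.e.\ to one application of the Bernoulli shift $\varphi$ as defined in~(\ref{MapDefn}); iterating $n$ experiments corresponds to $\varphi^n$, and by~(\ref{MapSubset}) the range of $\varphi^n$ on any cylinder $\mathcal{S}_{i_1\ldots i_n}$ is all of $\mathcal{S}$, mirroring the fact that after observing any finite run of states the process can still be in any state. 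Since the state distance $d$ on $S$ is recovered as the distance on $\mathcal{S}$ via the first coordinate, the metric structure relevant to sensitivity, transitivity and the Poisson-stability criteria is the same for the process and for $\varphi$. Therefore every chaotic attribute established for $\varphi$ — dense periodic points, transitivity, sensitivity, existence of Poisson stable but non-periodic motions — holds for the random dynamics, and the process $X(n)$ admits domain-structured chaos.

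The main obstacle is not technical but one of rigor in formalizing "coincidence of stochastic and deterministic dynamics": I would need to be careful to state precisely in what sense the random process is being regarded as a deterministic orbit — namely, by viewing the space of all sample paths, equipped with the shift, as a dynamical system and noting that the positivity of all transition/state probabilities guarantees that this sample-path space is the \emph{full} shift rather than a proper subshift, so that no admissible symbolic sequence is excluded. Once this identification is made explicit, the verification of the diameter and separation conditions is routine (as indicated above), and the conclusion follows by direct appeal to the earlier theorems.
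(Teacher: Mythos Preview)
Your proposal is correct and follows essentially the same route as the paper: you construct the same pre-chaotic structure $\mathcal{S}$ with the first-coordinate distance, verify the diameter and separation conditions (which the paper merely asserts are ``easy to verify''), invoke Theorems~\ref{Thm1} and~\ref{Thm2}, and then identify realizations of the process with symbol sequences and experiments with shift iterations, exactly as the paper does in the paragraph preceding the theorem. Your treatment is in fact more careful than the paper's, particularly in making explicit that the positivity assumption on the state probabilities is what guarantees the sample-path space is the full shift rather than a proper subshift.
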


That is, random processes such as coin tossing, dice rolling, traffic lights, tetrahedron dice rolling, and five city entrance,    \cite{Doob},  are chaotic, since the collections of their realizations (sample sequences) are chaotic sets.

\section{Conclusion}   We have discussed the appearance  of deterministic chaos in the
discrete random process. We introduce   an instrument that we call a  domain structured chaos, which can have many more future applications. The  approach can be used for
non-stationary processes, continuous-time  random
processes, as well as processes with  Markov chains.
Our research suggests that the methods historically developed for  chaotic
dynamics , e.g. synchronization and control, can be extended to dynamics
with probability.

\bibliography{ref}
\bibliographystyle{plain}

\end{document}